\newcommand{\R}{\mathbb R}
\newcommand{\Z}{\mathbb Z}
\newcommand{\lcol}{\lfloor\!\!\!\lceil}
\newcommand{\rcol}{\rceil\!\!\!\rfloor}
\DeclareMathOperator{\sgn}{sgn}
\newtheorem{theorem}{Theorem}[section]
\newtheorem*{TA}{Theorem A}
\newtheorem*{TB}{Theorem B}
\newtheorem*{TC}{Theorem  C}
\newtheorem*{TD}{Theorem D}
\newtheorem*{TE}{Theorem E}
\numberwithin{equation}{section}
\theoremstyle{remark}
\newtheorem{remark}[theorem]{Remark}
\begin{document}
\title[The Camassa-Holm equation]{Unique Continuation Properties for solutions
to the \\
Camassa-Holm equation and related models.}
\author{Felipe Linares}
\address[F. Linares]{IMPA\\
Instituto Matem\'atica Pura e Aplicada\\
Estrada Dona Castorina 110\\
22460-320, Rio de Janeiro, RJ\\Brazil}
\email{linares@impa.br}

\author{Gustavo Ponce}
\address[G. Ponce]{Department  of Mathematics\\
University of California\\
Santa Barbara, CA 93106\\
USA.}
\email{ponce@math.ucsb.edu}

\keywords{Camassa-Holm equation,  unique continuation}
\subjclass{Primary: 35Q51 Secondary: 37K10}

\begin{abstract} 
It is shown that if $\,u(x,t)\,$ is a real solution of the initial value problem for the Camassa-Holm equation which vanishes 
in an open set $\,\Omega\subset \R\times [0,T]$, then $\,u(x,t)=0,\,(x,t)\in\R\times [0,T]$. The argument of proof can be placed in a general setting to extend the above results to a class of non-linear non-local 1-dimensional models which includes the Degasperis-Procesi equation. This result also applies to solutions of the initial periodic boundary value problems associated to these models.

\end{abstract}

\thanks
{The first author was partially supported by CNPq and FAPERJ/Brazil.}

\maketitle

\section{Introduction}

This work is mainly concerned with  the  Camassa-Holm (CH) equation on the real line
\begin{equation}\label{CH1}
\partial_tu+3 u \partial_xu-\partial_t \partial_x^2 u=2\partial_xu \partial_x^2u+ u\partial_x^3u, \hskip5pt \;t,\,x\in\R.
\end{equation}

The CH equation \eqref{CH1} was first noted by Fuchssteiner and Fokas \cite{FF} in their work on hereditary symmetries. Later, it was written explicitly and derived physically as a model for shallow water waves  by Camassa and Holm \cite{CH}, who also examined its solutions. It also appears as a model in nonlinear dispersive waves in hyper-elastic rods \cite{Dai}, \cite{DaHu}. 

The CH equation \eqref{CH1} has received extensive attention due to its remarkable properties, among them the fact that it is a bi-Hamiltonian completely integrable model  (see \cite{BSS},  \cite{CH}, \cite{CoMc}, \cite{Mc}, \cite{M}, \cite{Par} and references therein).

 The CH equation possesses ``peakon'' solutions
\cite{CH}. In the case of a single peakon this solitary wave solution can be written as
\begin{equation}\label{peakon}
u_c(x,t)=c\, e^{-|x-ct|}, \hskip20pt c>0.
\end{equation}
The multi-peakon solutions exhibit the  ``elastic'' collision property that reflect their soliton character (see \cite{BSS2}).

It is convenient to write the CH equation \eqref{CH1} in the following (formally) equivalent form

\begin{equation}\label{CH}
\partial_tu+ u \partial_xu+\partial_x(1-\partial_x^2)^{-1}\big(u^2+\frac{1}{2} (\partial_xu)^2\big)=0, \hskip5pt \;t,\,x\in\R.
\end{equation}

The initial value problem (IVP)  as well as the initial periodic boundary value problem (IPBVP)  associated to the equation \eqref{CH} has been extensively examined. 
In particular, in \cite{LO} and \cite{RB}  strong local well-posedness (LWP) of the IVP was established in the Sobolev space
\[
H^s(\R)=(1-\partial_x^2)^{-s/2}L^2(\R),\;\;\;\;\;\;s>3/2.
\]
The peakon solutions do not belong to these spaces. In fact, 
\begin{equation}
\label{space}
\phi(x)=e^{-|x|}\notin W^{p,1+1/p}(\R)\;\;\;\;\;\text{for any}\;\;\;\;\;p\in[1,\infty),
\end{equation}
where $W^{s,p}(\R)=(1-\partial_x^2)^{-s/2}L^p(\R)$ with $\,W^{s,2}(\R)=H^s(\R)$.

However,
\[
\phi(x)=e^{-|x|}\in W^{1,\infty}(\R),
\]
where $W^{1,\infty}(\R)$ denotes the space of Lipschitz functions.

In \cite{CoEs1} it was proved that if $u_0\in H^1(\R)$ with $u_0-\partial_x^2u_0\in \mathcal {M}^+(\R)$, where $\mathcal{M}^+(\R)$ denotes the set of positive Radon measures with bounded total variation, then the IVP for the CH equation \eqref{CH} has a global weak solution $u\in L^{\infty}((0, \infty):H^1(\R))$.

 An improvement of the previous result was obtained in  \cite{CoMo} by showing that if $u_0\in H^1(\R)$ with $u_0-\partial_x^2u_0\in \mathcal {M}^+(\R)$,
then the  IVP for the CH equation \eqref{CH} has a unique solution 
$$
u\in C([0,\infty):H^1(\R))\cap C^1((0,\infty):L^2(\R))
$$
 satisfying that
$ y(t)\equiv u(\cdot,t)-\partial_x^2u(\cdot,t)\in \mathcal {M}^+(\R)$ is uniformly bounded in $[0,\infty)$.

In \cite{XZ}  the existence of a $H^1$-global weak solution for the IVP for the CH equation \eqref{CH} for 
data $u_0\in H^1(\R)$ was established. 

In \cite{CoEs1} and \cite{CoEs2} (see also \cite{LO}) there were deduced conditions on the data $u_0\in H^3(\R)$ which guarantee that the
corresponding  local solution $u\in C([0,T]:H^3(\R))$ of the IVP associated to the CH \eqref{CH} blows up in finite time
by showing that
$$
\lim_{t\uparrow T}\|\partial_xu(\cdot,t)\|_{\infty}=\infty,
$$
corresponding to the breaking of waves.
Observe that $H^1$-solutions of the CH equation \eqref{CH1} satisfy the conservation law
$$
E(u)(t)=\int_{-\infty}^{\infty}(u^2+(\partial_xu)^2)(x,t)dx= E(u_0),
$$
so that the $H^1$-norm of the solutions  remains invariant 
within the existence interval.

More recently, in \cite{BC} and \cite{BCZ}  the existence and uniqueness, 
 respectively,  of a $H^1$ global solution for the CH equation \eqref{CH} was  settled.

 For other well-posedness results see also \cite{GHR1}, \cite{GHR2}  and references  therein.
\vskip.1in

We shall describe the class of solutions which we will be working with.
 First, we consider the IVP and recall a result found in \cite{LiPoSi} motivated by an early work \cite{LKT} for the IPBVP:

\begin{TA}[\cite{LiPoSi}]\label{thm1}
Given  $u_0\in X\equiv H^1(\R)\cap W^{1,\infty}(\R)$, there exist a non-increasing function $T=T(\|u_0\|_X)>0$ and a unique solution $u=u(x,t)$ of the IVP associated to the CH equation \eqref{CH}
such that
\begin{equation}
\begin{aligned}
\label{class-sol}
u\in Z_T
\equiv &C([0,T]\!:\!H^1(\R))\cap C^1((0,T)\!:\!L^2(\R)\\
&
\cap C^1((0,T)\!:\!L^2(\R))= \mathcal Y_T \cap C^1((0,T)\!:\!L^2(\R),
\end{aligned}
\end{equation}
with
\[
\sup_{[0,T]}\|u(\cdot,t)\|_X=\sup_{[0,T]}(\|u(\cdot,t)\|_{1,2}+\|u(\cdot,t)\|_{1,\infty})\leq c\|u_0\|_X,
\]
for some universal constant $c>0$.
Moreover, given $R>0$, the map $u_0\mapsto u$, taking the data to the solution, is continuous from the ball $\{u_0\in X :\|u_0\|_X\le R\}$
into $\mathcal Y_{T(R)}$.
\end{TA}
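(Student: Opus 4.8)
The plan is to treat \eqref{CH} as a quasilinear transport equation carrying a regularizing nonlocal forcing,
\[
\partial_t u+u\,\partial_x u=F(u),\qquad F(u):=-\partial_x(1-\partial_x^2)^{-1}\!\paren{u^2+\tfrac12(\partial_x u)^2},
\]
and to construct the solution from two a~priori estimates. Writing $(1-\partial_x^2)^{-1}$ as convolution with $p(x)=\tfrac12 e^{-|x|}$ and using $\partial_x^2(1-\partial_x^2)^{-1}=(1-\partial_x^2)^{-1}-I$, one records that $g(u):=u^2+\tfrac12(\partial_x u)^2$ is a quadratic expression in quantities lying in $L^1\cap L^2\cap L^\infty$ whenever $u\in X=H^1(\R)\cap W^{1,\infty}(\R)$, so $F$ maps $X$ into $X$, with $\norm{F(u)}{X}\lesssim\norm{u}{X}^2$ and $\norm{F(u)-F(v)}{X}\lesssim(\norm{u}{X}+\norm{v}{X})\norm{u-v}{X}$. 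Hence the only obstruction to a naive Picard iteration in $X$ is the loss of one derivative in $u\,\partial_x u$; the subtler, and ultimately decisive, difficulty is that $(\partial_x u)^2$ is quadratic in merely $W^{1,\infty}$-regular objects, so one has to work without the usual Sobolev headroom.

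Two estimates carry the argument. First, solutions of \eqref{CH} satisfy the exact conservation $E(u)(t)=\norm{u(t)}{H^1}^2=E(u_0)$, controlling the $H^1$ part of the $X$-norm for all time. Second, differentiating \eqref{CH} and setting $v=\partial_x u$ gives
\[
\partial_t v+u\,\partial_x v=u^2-\tfrac12 v^2-p*\!\paren{u^2+\tfrac12 v^2},
\]
so that along any characteristic $\dot q=u(q,t)$ --- a well-defined bi-Lipschitz flow on $\R$ since $u\in W^{1,\infty}$ --- one has, using $\norm{u}{L^\infty}^2\le\tfrac12 E(u_0)$, $p*g(u)\ge 0$ and $\norm{p*g(u)}{L^\infty}\le\tfrac12 E(u_0)$,
\[
-\tfrac12\paren{v^2+E(u_0)}\ \le\ \frac{d}{dt}\,v(q(t),t)\ \le\ \tfrac12 E(u_0).
\]
The upper bound yields at most linear growth; the lower one is a Riccati inequality whose comparison solution reaches $-\infty$ only after a time bounded below by a positive non-increasing function of $\norm{u_0}{X}$ --- the wave-breaking mechanism, and the reason the lifespan is finite and data-dependent. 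Transporting these bounds back by the flow gives $\sup_{[0,T]}\norm{u(t)}{X}\le c\,\norm{u_0}{X}$ on $[0,T]=[0,T(\norm{u_0}{X})]$, with $c$ universal.

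For existence one regularizes. For smooth data $u_0\in H^s(\R)$, $s>3/2$, the local well-posedness of \cite{LO}, \cite{RB} gives $u\in C([0,T^*):H^s)$; since its blow-up criterion is $\norm{\partial_x u(t)}{L^\infty}\to\infty$, the second estimate above forces $T^*\ge T(\norm{u_0}{X})$ with the uniform bound $\sup_{[0,T]}\norm{u(t)}{X}\le c\norm{u_0}{X}$. For general $u_0\in X$ one mollifies, $u_0^\eps\to u_0$ in $H^1(\R)$ with $\norm{u_0^\eps}{X}\le\norm{u_0}{X}$, obtains $u^\eps\in C([0,T]:H^\infty)$ bounded in $C([0,T]:X)$ on the common interval $[0,T(\norm{u_0}{X})]$, and passes to the limit. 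This is the technical heart: identifying the limit of $(\partial_x u^\eps)^2$ requires \emph{strong} $L^2_{\mathrm{loc}}$-compactness of $\{\partial_x u^\eps\}$, and it is precisely the uniform $W^{1,\infty}$-bound --- ruling out oscillation or concentration in $\partial_x u^\eps$ --- that is exploited, together with a Bona--Smith-type comparison of the approximations and Calder\'on-type commutator bounds (depending only on Lipschitz norms), to produce it. Once $u$ solves \eqref{CH} and obeys the $X$-bound, the identity $\partial_t u=-u\,\partial_x u-F(u)$ places $\partial_t u$ in $C((0,T):L^2)$ --- for $u\,\partial_x u$ one uses the uniform $L^\infty$-bounds on $u,\partial_x u$ and the $H^1$-continuity of $u$ found below --- so $u\in C^1((0,T):L^2)$; and weak-in-time $H^1$-continuity of $u$ together with the exact conservation $\norm{u(t)}{H^1}=\norm{u_0}{H^1}$ upgrades, by the Radon--Riesz property of $H^1$, to $u\in C([0,T]:H^1)$. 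Thus $u\in Z_T$.

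Uniqueness follows from a stability estimate for $w=u_1-u_2$, which solves $\partial_t w+u_1\,\partial_x w+w\,\partial_x u_2+\partial_x(1-\partial_x^2)^{-1}(g(u_1)-g(u_2))=0$: since $\partial_x(1-\partial_x^2)^{-1}\partial_x=(1-\partial_x^2)^{-1}-I$ is bounded on $L^2$ and $\partial_x^2 u_i=u_i-(1-\partial_x^2)u_i$ trades distributional second derivatives against $m_i=(1-\partial_x^2)u_i$, a mollified energy argument --- carried out in $H^1$ along the characteristics of $u_1$, with commutators controlled by Calder\'on-type bounds involving only $\norm{u_i}{W^{1,\infty}}$ --- yields $\norm{w(t)}{H^1}\le C(R)\,\norm{w(0)}{H^1}$ for data in $\{\norm{u_0}{X}\le R\}$, and uniqueness is the case $w(0)=0$. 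Continuity of $u_0\mapsto u$ into $\mathcal Y_{T(R)}=C([0,T(R)]:H^1)$ then follows: if $u_0^k\to u_0$ in $X$ then $u^k\to u$ in $C([0,T]:L^2)$ by the above, while $\norm{u^k(t)}{H^1}^2=\norm{u_0^k}{H^1}^2\to\norm{u_0}{H^1}^2=\norm{u(t)}{H^1}^2$ uniformly in $t$ by conservation and $\sup_{k,t}\norm{u^k(t)}{H^1}<\infty$, so
\[
\norm{u^k(t)-u(t)}{H^1}^2=\paren{\norm{u_0^k}{H^1}^2-\norm{u_0}{H^1}^2}-2\,\langle u^k(t)-u(t),u(t)\rangle_{H^1}\ \longrightarrow\ 0
\]
uniformly in $t$ (the uniformity uses compactness of $\{u(t):t\in[0,T]\}$ in $H^1$ and the $L^2$-smallness of $u^k-u$); continuity is asserted only into $C([0,T]:H^1)$, not into all of $Z_T$, because the $W^{1,\infty}$-component of the solution is merely bounded, not continuous, in the datum. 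I expect the main obstacle throughout to be this lack of Sobolev headroom --- it is what makes the strong compactness of $\{\partial_x u^\eps\}$ and the closing of the stability estimate nontrivial, forcing the whole argument onto the transport/Lagrangian structure and Calder\'on-type commutator estimates rather than ordinary energy methods --- while the remaining ingredients are the classical characteristic and energy computations above or soft consequences of the conservation law.
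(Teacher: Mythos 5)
Note first that the paper you were given contains no proof of Theorem A: it is quoted verbatim from \cite{LiPoSi}, which in turn adapts the method of \cite{LKT} for the periodic problem. In that source the theorem is \emph{not} obtained by the a priori estimate/mollification/energy-stability scheme you outline, but by recasting \eqref{CH} as an ordinary differential equation in a Banach space for the Lagrangian variables $\big(\varphi, u\circ\varphi, (\partial_xu)\circ\varphi\big)$, where $\varphi$ is the flow of $u$; the right-hand side is locally Lipschitz on the relevant space precisely because $u_0\in W^{1,\infty}$, and existence, uniqueness, the lower bound on the lifespan, and continuity of the data-to-solution map all come out of the Picard--Lindel\"of theorem at once. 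Your a priori estimates are correct and are indeed the mechanism behind the quantitative statement: the conservation of $E(u)$ controls the $H^1$ part, and the identity $\tfrac{d}{dt}\,v(q(t),t)=u^2-\tfrac12v^2-p*\big(u^2+\tfrac12v^2\big)$ along characteristics, with the two-sided bound you record, is exactly what gives a lifespan $T=T(\|u_0\|_X)$ and the bound $\sup_{[0,T]}\|u(\cdot,t)\|_X\le c\|u_0\|_X$.

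The difficulty is that the two decisive steps of your Eulerian scheme are asserted rather than proved, and neither is routine. (i) For the limit $\eps\to0$ you need $(\partial_xu^\eps)^2\to(\partial_xu)^2$ in the sense of distributions, for which you invoke strong $L^2_{\mathrm{loc}}$ compactness of $\{\partial_xu^\eps\}$ ``because the uniform $W^{1,\infty}$ bound rules out oscillation.'' It does not: a uniform $L^\infty$ bound on $\partial_xu^\eps$ is perfectly compatible with $O(1)$-amplitude oscillation at scale $\eps$, and no uniform bound on a full extra derivative is available in this class (for the peakon $\partial_x^2u$ is a measure). Some genuinely additional input is required --- in \cite{XZ} it is compensated compactness, in \cite{LKT}/\cite{LiPoSi} the issue is bypassed entirely by solving in Lagrangian coordinates. (ii) The uniqueness step rests on a claimed stability bound $\|w(t)\|_{H^1}\le C(R)\|w(0)\|_{H^1}$ for $w=u_1-u_2$. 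If you attempt the corresponding energy identity, the term $\partial_x(1-\partial_x^2)^{-1}\big(\tfrac12(\partial_xu_1+\partial_xu_2)\,\partial_xw\big)$ forces, after the integrations by parts you describe, a contribution controlled only by $\|w\|_{L^2}\|\partial_xw\|_{L^2}$, i.e.\ by $R\|w\|_{L^2}$ rather than $\|w\|_{L^2}^2$; Gronwall then yields $\|w(t)\|_{L^2}\le e^{Ct}\|w(0)\|_{L^2}+R(e^{Ct}-1)$, which does not vanish when $w(0)=0$ and so gives neither uniqueness nor the Lipschitz $H^1$ stability you assert (a claim that would in fact make your subsequent Radon--Riesz upgrade superfluous). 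The ``Calder\'on-type commutator bounds depending only on Lipschitz norms'' are exactly the estimates that would have to be produced, and they are the content, not a footnote, of the cited proof; the Lagrangian ODE formulation is how \cite{LiPoSi} actually obtains uniqueness and the continuity of $u_0\mapsto u$ into $\mathcal Y_{T(R)}$.
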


\begin{remark}\label{rem1}
 The strong notion of LWP introduced in \cite{Ka} does not hold in this case. 
This notion includes existence, uniqueness, {\it persistence property}, namely that
if $u_0\in Y$, then $u\in C([0,T]:Y)$, and that the map data $\,\mapsto\,$ solution is locally continuous from $Y$ to $C([0,T]:Y)$. In particular, this strong version of
LWP  guarantees that the solution flow defines a dynamical system in $Y$. 

As it was mentioned before the strong concept of LWP holds in $\,H^s(\R)$ with $s>3/2$, where the peakon solutions are not included. 
  \end{remark}
\vskip.1in

In this work we are interested in unique continuation properties of solutions of the CH equation. Thus, we recall two results established in \cite{HMPZ} 
regarding unique continuation and decay persistence properties of solutions of the IVP for the equation \eqref{CH}:

\vskip.1in

\begin{TB}[\cite{HMPZ}] \label{A}  Assume that for some $T>0$ and $s>3/2$,
\[
u\in C([0,T]: H^s(\R))\cap C^{1}((0,T):H^{s-1}(\R))
\]
is a strong real solution of the IVP associated to the CH equation \eqref{CH}. If for some $\alpha\in(1/2,1)$, $u_0(x)=u(x,0)$ satisfies  
\begin{equation}\label{h1}
|u_0(x)|= o(e^{-x})\;\;\;\;\text{and}\;\;\;\;|\partial_xu_0(x)|=  O(e^{-\alpha x}),\;\;\;\;\text{as}\;\;\;x\uparrow \infty,
\end{equation}
and there exists $t_1\in (0,T]$ such that 
\[
|u(x,t_1)|= o(e^{-x}),\;\;\;\;\;\;\text{as}\;\;\;x\uparrow \infty,
\]
then $u\equiv 0$.
\end{TB}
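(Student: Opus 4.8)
The plan is to read off, from the nonlocal structure of \eqref{CH}, the exact exponential rate at which a solution decays as $x\uparrow\infty$, and then to use the two pointwise hypotheses (at $t=0$ and at $t=t_1$) to force that rate faster than any admissible one --- which, since the rate turns out to be governed by a nonnegative weighted mass, is possible only when $u\equiv0$. Throughout, write \eqref{CH} as
\[
\partial_tu+u\partial_xu+\partial_x\big(p*F(u)\big)=0,\qquad p(x)=\tfrac12\,e^{-|x|},\qquad F(u)=u^2+\tfrac12(\partial_xu)^2\ge 0 ,
\]
where $p$ is the kernel of $(1-\partial_x^2)^{-1}$. First I would propagate the decay of the data. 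The hypotheses give $|u_0(x)|+|\partial_xu_0(x)|\le C\,e^{-\alpha x}$ for $x\ge1$, so the companion decay-persistence result for \eqref{CH} (the second statement recalled from \cite{HMPZ}, obtained by weighted energy estimates against a bounded Lipschitz weight $\phi_N\simeq\min\{e^{\alpha x},e^{\alpha N}\}$) furnishes a constant $C=C(u_0,T)$ with
\[
|u(x,t)|+|\partial_xu(x,t)|\le C\,e^{-\alpha x}\qquad\text{for all }x\ge1,\ t\in[0,T].
\]
Because $\alpha>1/2$, this bound is precisely what makes
\[
I(t):=\int_{-\infty}^{\infty}e^{y}\,F(u)(y,t)\,dy
\]
finite, continuous and nonnegative on $[0,T]$: near $-\infty$ one bounds $e^{y}F(u)\le F(u)\in L^1(\R)$ (as $u(t)\in H^1$), while near $+\infty$ one has $e^{y}F(u)(y,t)\lesssim e^{(1-2\alpha)y}$ with $1-2\alpha<0$.

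The crux is an asymptotic expansion of the nonlocal term. Splitting the convolution,
\[
\partial_x\big(p*F(u)\big)(x,t)=-\tfrac12\,e^{-x}I(t)+\tfrac12\,e^{-x}\!\!\int_x^{\infty}\!\!e^{y}F(u)\,dy+\tfrac12\,e^{x}\!\!\int_x^{\infty}\!\!e^{-y}F(u)\,dy ,
\]
and both remainder integrals are $O(e^{-2\alpha x})$ as $x\uparrow\infty$, uniformly for $t\in[0,T]$, by the decay estimate. Inserting this into the equation and multiplying by $e^{x}$ gives
\[
\partial_t\big(e^{x}u(x,t)\big)=-\,e^{x}u\,\partial_xu(x,t)+\tfrac12\,I(t)+O\!\big(e^{(1-2\alpha)x}\big),
\]
where $|e^{x}u\,\partial_xu(x,t)|\le C^{2}e^{(1-2\alpha)x}\to0$ and the error is $O(e^{(1-2\alpha)x})\to0$ as $x\uparrow\infty$, both uniformly in $t$. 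Integrating in time from $0$ to $t$ (the solution is only $C^{1}$ on $(0,T)$, but $\partial_tu$ is bounded on $[0,T]$ by the equation, so the fundamental theorem of calculus applies to $u$ itself for each fixed $x$), using $e^{x}u(x,0)\to0$ from the hypothesis at $t=0$, and letting $x\uparrow\infty$ (the $x$-dependent integrands tend to $0$ uniformly in $s\in[0,t]$), I would obtain that
\[
g(t):=\lim_{x\uparrow\infty}e^{x}u(x,t)\quad\text{exists and equals}\quad \tfrac12\int_0^{t}I(s)\,ds ;
\]
in particular $g$ is nondecreasing, since $I\ge0$.

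It then remains to evaluate at $t=t_1$. The hypothesis $|u(x,t_1)|=o(e^{-x})$ says exactly $g(t_1)=0$, hence $\int_0^{t_1}I(s)\,ds=0$, and since $I$ is continuous and nonnegative, $I\equiv0$ on $[0,t_1]$. Because $e^{y}F(u)(y,t)\ge0$, this forces $F(u)(\cdot,t)\equiv0$, i.e.\ $u^2+\tfrac12(\partial_xu)^2\equiv0$ on $\R$ for each $t\in[0,t_1]$, hence $u(\cdot,t)\equiv0$ for every $t\in[0,t_1]$. If $t_1<T$, running the uniqueness for the Cauchy problem (Theorem~A, or the $H^s$ local theory of \cite{LO}, noting $H^s\subset H^1\cap W^{1,\infty}$ for $s>3/2$) forward from $t=t_1$ with zero data gives $u\equiv0$ on $[t_1,T]$ as well, so $u\equiv0$ on $\R\times[0,T]$.

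The step I expect to be the real obstacle is securing the uniformity needed in the two limiting arguments: the decay-persistence bound must come with a constant uniform on the whole interval $[0,T]$, and then the interchange of $\lim_{x\uparrow\infty}$ with $\int_0^{t}(\cdot)\,ds$ must be justified. Both hinge on the strict inequality $\alpha>1/2$, equivalently $1-2\alpha<0$: it is what makes the weighted mass $I(t)$ finite in the first place, and what forces the transport contribution $e^{x}u\,\partial_xu$ and the convolution remainders to decay like $e^{(1-2\alpha)x}$ and hence vanish in the limit. A secondary, purely technical issue is the limited time regularity at the endpoint $t=0$, handled as indicated above.
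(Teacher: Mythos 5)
This statement (Theorem B) is quoted from \cite{HMPZ} and the paper gives no proof of it, but your argument is correct and is essentially the proof in that reference: propagate the $O(e^{-\alpha x})$ decay via the persistence result (Theorem C), split $\partial_x\big(\tfrac12 e^{-|\cdot|}*F(u)\big)$ to isolate the leading term $-\tfrac12 e^{-x}\int e^{y}F(u)\,dy$, integrate the equation in time, multiply by $e^{x}$ and let $x\uparrow\infty$ to get $\lim e^{x}u(x,t_1)=-\tfrac12\int_0^{t_1}\!\!\int e^{y}\big(u^2+\tfrac12(\partial_xu)^2\big)\,dy\,ds$, and conclude from the sign of the integrand and the $o(e^{-x})$ hypotheses at $t=0$ and $t=t_1$. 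All the key points are in place, including the role of $\alpha>1/2$ in making the weighted mass finite and the remainders $o(e^{-x})$.
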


 Roughly, Theorem B is optimal:

\begin{TC}[\cite{HMPZ}] \label{B}  Assume that for some $T>0$ and $s>3/2$,
\[
u\in C([0,T]: H^s(\R))\cap C^{1}((0,T):H^{s-1}(\R))
\]
is a strong real solution of the IVP associated to the CH equation \eqref{CH}. If 
for some $\theta\in(0,1)$,
$u_0(x)=u(x,0)$ satisfies 
\[
|u_0(x)|,\;\;\;\;|\partial_xu_0(x)|=  O(e^{-\theta x}),\;\;\;\;\text{as}\;\;\;x\uparrow \infty,
\]
then
\[
|u(x,t)|,\;\;\;\;|\partial_xu(x,t)|=  O(e^{-\theta x}),\;\;\;\;\text{as}\;\;\;x\uparrow \infty,
\]
uniformly in the time interval $[0,T]$.
\end{TC}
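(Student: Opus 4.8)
The plan is to run a weighted $L^p$ energy estimate, uniformly in $p$ and in a family of bounded weights that approximate $e^{\theta x}$, and then to let $p\to\infty$ and pass to the limit in the weight. I will work throughout with the nonlocal form \eqref{CH}, writing $F(u):=(1-\partial_x^2)^{-1}\big(u^2+\tfrac12(\partial_x u)^2\big)=G*\big(u^2+\tfrac12(\partial_x u)^2\big)$ with $G(x)=\tfrac12 e^{-|x|}$, so that $\partial_t u+u\partial_x u=-\partial_x F(u)$. Differentiating in $x$ and using $\partial_x^2(1-\partial_x^2)^{-1}=(1-\partial_x^2)^{-1}-I$, the quantity $w:=\partial_x u$ solves the transport equation $\partial_t w+u\partial_x w=u^2-\tfrac12 w^2-F(u)$. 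Since $s>3/2$ gives $H^s(\R)\hookrightarrow W^{1,\infty}(\R)$, the number $M:=\sup_{t\in[0,T]}\big(\|u(\cdot,t)\|_{L^\infty}+\|\partial_x u(\cdot,t)\|_{L^\infty}\big)$ is finite.

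Next, for $N\in\nat$ I introduce the bounded Lipschitz weights $\varphi_N(x):=e^{\theta\min\{x,N\}}$, which increase pointwise to $e^{\theta x}$, satisfy $0\le\varphi_N'(x)\le\theta\varphi_N(x)$ a.e., and — most importantly — obey the slowly-varying bound $\varphi_N(x)\le\varphi_N(y)\,e^{\theta|x-y|}$ for all $x,y\in\R$. Multiplying the $u$-equation and the $w$-equation by $\varphi_N^{\,p}|u|^{p-2}u$ and $\varphi_N^{\,p}|w|^{p-2}w$ respectively ($2\le p<\infty$), integrating in $x$, and integrating by parts in the transport terms, one uses $\varphi_N'\le\theta\varphi_N$, $\theta<1$, and Hölder's inequality to obtain, for $H_p(t):=\|\varphi_N u(\cdot,t)\|_{L^p}+\|\varphi_N w(\cdot,t)\|_{L^p}$,
\begin{equation*}
\frac{d}{dt}H_p(t)\;\lesssim\; M\,H_p(t)+\|\varphi_N F(u)(\cdot,t)\|_{L^p}+\|\varphi_N\,\partial_x F(u)(\cdot,t)\|_{L^p}.
\end{equation*}
For solutions with $s\in(3/2,2)$, where $\partial_x^2 u$ is only a distribution, these weighted identities must be justified in the usual way, by a Friedrichs-mollification/commutator argument. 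The crucial input is the estimate of the nonlocal terms: since $|G(x)|,|\partial_x G(x)|\le\tfrac12 e^{-|x|}$, the slowly-varying bound gives the pointwise inequality $\varphi_N(x)\,|F(u)(x)|\le\tfrac12\big(e^{-(1-\theta)|\cdot|}*\varphi_N(u^2+\tfrac12 w^2)\big)(x)$, and likewise for $\partial_x F(u)$; then Young's inequality — here $e^{-(1-\theta)|\cdot|}\in L^1$ precisely because $\theta<1$, which is exactly where the hypothesis $\theta\in(0,1)$ enters — together with $\|\varphi_N u^2\|_{L^p}\le\|u\|_{L^\infty}\|\varphi_N u\|_{L^p}$ and $\|\varphi_N w^2\|_{L^p}\le\|w\|_{L^\infty}\|\varphi_N w\|_{L^p}$, yields $\|\varphi_N F(u)\|_{L^p}+\|\varphi_N\partial_x F(u)\|_{L^p}\le\frac{C}{1-\theta}M\,H_p(t)$. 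The resulting constant depends only on $\theta$ and $M$, not on $N$ or $p$, so Gronwall's inequality gives $H_p(t)\le H_p(0)\,e^{C_\theta M T}$ for every $t\in[0,T]$, every $N$, and every $p\in[2,\infty)$.

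Finally, for each fixed $N$ the functions $\varphi_N u(\cdot,t)$ and $\varphi_N w(\cdot,t)$ belong to $L^2\cap L^\infty$ (since $\varphi_N$ is bounded and $u(\cdot,t)\in H^s$), so letting $p\to\infty$ in the last inequality gives $\|\varphi_N u(\cdot,t)\|_{L^\infty}+\|\varphi_N w(\cdot,t)\|_{L^\infty}\le\big(\|\varphi_N u_0\|_{L^\infty}+\|\varphi_N\partial_x u_0\|_{L^\infty}\big)e^{C_\theta M T}$. The hypotheses $|u_0(x)|,\,|\partial_x u_0(x)|=O(e^{-\theta x})$ as $x\uparrow\infty$ together with $u_0,\partial_x u_0\in L^\infty(\R)$ bound the right-hand side uniformly in $N$; letting $N\to\infty$ (monotone convergence, $\varphi_N\uparrow e^{\theta x}$) and invoking the continuity of $u$ and $\partial_x u$ yields $e^{\theta x}\big(|u(x,t)|+|\partial_x u(x,t)|\big)\le C$ for all $x\in\R$ and $t\in[0,T]$, which is the assertion. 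I expect the main obstacle to be twofold: securing the nonlocal estimate with a constant independent of both $N$ and $p$, so that the double limit $p\to\infty$ then $N\to\infty$ actually closes, and rigorously justifying the weighted energy identities at the low regularity $s\in(3/2,2)$.
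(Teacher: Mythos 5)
Theorem C is not proved in this paper; it is quoted from \cite{HMPZ}, and the proof given there is exactly the argument you outline: weighted $L^p$ estimates for $u$ and $\partial_x u$ with the truncated weights $e^{\theta\min\{x,N\}}$, the slowly-varying bound $\varphi_N(x)\le\varphi_N(y)e^{\theta|x-y|}$ used to trade the kernel $e^{-|x|}$ for the integrable kernel $e^{-(1-\theta)|x|}$ (the only place $\theta<1$ enters), Gronwall with constants uniform in $p$ and $N$, and then the double limit $p\to\infty$, $N\to\infty$. Your proposal is correct and coincides with that proof, including the acknowledged need to justify the weighted identities by mollification when $s\in(3/2,2)$.
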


\begin{remark} In \cite{LiPoSi}  Theorem B and Theorem C were extended to the class considered in Theorem A.

\end{remark}

\vskip.1in

Our first result in this work is :

\begin{theorem} \label{IVPCH}
Let $\,u=u(x,t)$ be a real solution of the IVP associated to the CH equation \eqref{CH} in the class described in Theorem A.
If there exists an open set $\,\Omega\subset \R\times [0,T]$ such that 
\begin{equation}
\label{cond1}
u(x,t)=0,\;\;\;\;\;\;\;(x,t)\in \Omega,
\end{equation}
then $\,u\equiv 0$.

\end{theorem}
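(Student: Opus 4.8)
The plan is to exploit the nonlocal structure of the equation \eqref{CH} together with the transport structure of its local part. Writing $F(u)=\partial_x(1-\partial_x^2)^{-1}\big(u^2+\tfrac12(\partial_xu)^2\big)$, the equation reads $\partial_t u + u\,\partial_x u + F(u)=0$. First I would use the open set $\Omega$ to produce a \emph{full time slab} on which $u$ vanishes. Pick $t_0$ with $(\cdot,t_0)$ meeting $\Omega$, so $u(\cdot,t_0)\equiv 0$ on some interval $(a,b)$; more is true, since $u$ vanishes on an open neighborhood in space-time, hence $\partial_t u$ and all available $x$-derivatives vanish there as well. The key algebraic observation is that where $u\equiv 0$ and $\partial_x u\equiv0$, the source term $u^2+\tfrac12(\partial_xu)^2$ vanishes too, so $F(u)$ is forced by data that is zero on $(a,b)$; but $(1-\partial_x^2)^{-1}$ has the explicit kernel $\tfrac12 e^{-|x|}$, which is not compactly supported, so $F(u)(x,t_0)$ need not vanish outside $(a,b)$. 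This is exactly the point where the argument must be done carefully rather than pointwise.

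Second, the heart of the proof should be a \emph{spatial} unique continuation step: I would show that if, on some time interval, $u\equiv 0$ on a spatial interval $(a,b)$, then the vanishing propagates outward in $x$ to all of $\R$. The mechanism: on the region where $u=0$, \eqref{CH} gives $F(u)(x,t)=0$ there, i.e. $\partial_x(1-\partial_x^2)^{-1}g=0$ on $(a,b)$ with $g=u^2+\tfrac12(\partial_xu)^2\ge 0$. Using the representation $(1-\partial_x^2)^{-1}g=\tfrac12 e^{-|x|}*g$ and differentiating, the condition $\partial_x\big(\tfrac12 e^{-|x|}*g\big)=0$ on an interval, combined with $g\ge0$ and $g\in L^1$, can be turned into the statement that $\int_{\R} e^{-y}\mathbf{1}_{\{y>x\}}(\cdots)\,g(y)\,dy$ and the symmetric piece balance in a way that, after a further differentiation or a Gronwall-type comparison along the interval, forces $g\equiv 0$ on a one-sided neighborhood and then, by continuation, on all of $\R$. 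Concretely: for $x$ to the right of the support boundary one gets $\tfrac12 e^{-x}\int_{-\infty}^{x} e^{y}g(y)\,dy - \tfrac12 e^{x}\int_{x}^{\infty}e^{-y}g(y)\,dy \equiv 0$ on the vanishing interval; writing $G_\pm(x)=\int e^{\pm y}g$, this is $e^{-x}G_-(x)=e^xG_+(x)$, and differentiating (using $G_\pm'=\pm e^{\pm x}g=0$ on the interval since $g=0$ there) yields $-e^{-x}G_-=e^xG_+$, so both vanish; since $G_+(x)=\int_x^\infty e^{-y}g(y)\,dy=0$ and $g\ge0$, $g\equiv0$ on $(x,\infty)$, hence $\partial_x u\equiv0$ and $u^2\equiv0$ there, i.e. $u\equiv0$ on $(a,\infty)$. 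The same argument run to the left gives $u\equiv0$ on $(-\infty,b)$, so $u(\cdot,t)\equiv0$ on $\R$.

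Third, I would upgrade ``$u$ vanishes on the slice $t=t_0$'' to ``$u$ vanishes on $\R\times[0,T]$'' by invoking forward and backward uniqueness for the IVP in the class of Theorem A: once $u(\cdot,t_0)=0$ in $H^1\cap W^{1,\infty}$, the unique solution with that datum is the zero solution, and by time-reversibility of \eqref{CH} (the equation is invariant under $t\mapsto -t$, $u\mapsto -u$) this pins $u$ down on the whole interval $[0,T]$, giving $u\equiv0$. Thus the three ingredients are: (i) localize to a time slice using openness of $\Omega$; (ii) the nonlocal/positivity spatial continuation argument above to spread the zero over all of $\R$ at that time; (iii) uniqueness of the Cauchy problem to spread it over all of $[0,T]$.

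The main obstacle I anticipate is step (ii): making rigorous the passage from ``$\partial_x(1-\partial_x^2)^{-1}g=0$ on an interval, $g\ge0$'' to ``$g\equiv0$ beyond that interval''. One has to be careful that $u\in Z_T$ only gives $\partial_x u\in L^\infty\cap L^2$, so $g\in L^1\cap L^\infty$ and the convolution manipulations and differentiations under the integral are justified; one also needs that on the \emph{open} set $\Omega$ we genuinely have both $u=0$ and $\partial_x u=0$ (true, since $\Omega$ is open in $\R\times[0,T]$) so that $g=0$ there and the boundary-term computations are valid. A secondary subtlety is that $\Omega$ need not be a product set, so one first intersects it with a horizontal line $t=t_0$ to obtain the required spatial interval; and if the intersection with $\{t=t_0\}$ is empty for the ``good'' $t_0$, one simply chooses $t_0$ so that the horizontal slice of $\Omega$ is a nonempty open interval, which is possible since $\Omega$ is a nonempty open subset of $\R\times[0,T]$.
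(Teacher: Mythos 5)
Your proposal is correct and follows essentially the same route as the paper: isolate a time slice on which $u$, hence $g=u^2+\tfrac12(\partial_xu)^2$ and its nonlocal image $F=\partial_x(1-\partial_x^2)^{-1}g$, vanish on an interval, and use $g\ge 0$ together with the explicit kernel $-\tfrac12\sgn(\cdot)e^{-|\cdot|}$ to force $g\equiv 0$. Your step (ii) is precisely the variant the authors record in the remark immediately following their proof (differentiating $F$ on the interval rather than comparing $F(a)$ with $F(b)$ via the kernel monotonicity \eqref{P4}), and your explicit step (iii) (forward plus time-reversed uniqueness in the class of Theorem A) supplies the propagation from the slice $t=t^*$ to all of $[0,T]$ that the paper leaves implicit.
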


\begin{remark} (i) To the best of our knowledge Theorem \ref{IVPCH} is the only unique continuation result available for solutions of the IVP associated to the CH equation.

(ii) A stronger version of this result has been established for the Korteweg-de Vries (KdV) equation,  the1-dimensional non-linear Schr\"odinger (NLS) equation and the Benjamin-Ono (BO) equation in \cite{SaSc}, \cite{Iz} and \cite{KPV19} respectively. More precisely, it was proven there that if $\, u_1,\,u_2$ are two solutions of these equations which agree in an open set $\,\Omega\subset \R\times [0,T]$, then they are identical.

 Our approach is simpler than the ones  in  \cite{SaSc}, \cite{Iz} and \cite{KPV19} but it does not allow us to obtain the above mention stronger result. It applies only to a single solution of the CH equation since it depends on the whole structure of the equation. 

Roughly speaking, this is related to the unique continuation property known for these equations under assumptions of  decay at infinity at two different times. For the KdV and the 1-dimensional NLS equations results are known for the difference of two solutions $u_1,\,u_2$, see \cite{EKPV}, \cite{EKPV2} and references therein.  However, the corresponding results for the BO and the CH equations require that $u_2(x,t)\equiv 0$, see  \cite{FLP}, \cite{LP} and references therein for the BO equation and  \cite{HMPZ} for the CH equation.
 
\end{remark}

\begin{remark}
 In the proof of Theorem \ref{IVPCH} the only condition on the structure of integrand term in \eqref{CH} 
 $$
\big( u^2+\frac{1}{2}\,(\partial_xu\big)^2)(x,t) 
 $$
 needed  is that it is non-negative. Hence, the same proof provides a similar result for any equation of the form
 \begin{equation}
 \label{gen1}
 \partial_tu+g(u,\partial_xu)+\partial_x(1-\partial_x^2)^{-1} h(u,\partial_xu)=0,
 \end{equation}
 with
 \begin{equation}
 \label{gen1a}
 g(\cdot,\cdot),\,\,h(\cdot,\cdot)\;\;\;\;\;\text{ smooth}\;\;\;\;\;\; g(0,0)=h(0,0)=0,
 \end{equation}
  and 
 \begin{equation}
 \label{gen2}
 h(x,y)>0,\;\;\;\;\;\;\;\;\;\forall \,(x,y)\neq (0,0).
 \end{equation}
 
 If $h(\cdot,\cdot)=h(\cdot)$  one requires that $h(x)>0$ whenever $x\neq 0$.

The class of equations described in \eqref{gen1}-\eqref{gen2} includes  the so called  $b$-equations (rod equations), see \cite{EY}, \cite{DaHu},
$$
\partial_tu+(b+1) u \partial_xu-\partial_t \partial_x^2 u=b\partial_xu \partial_x^2u+ u\partial_x^3u
$$
which can be written as 
\begin{equation}
\label{b-eq}
\partial_tu+u\partial_x u +\,\partial_x(1-\partial_x^2)^{-1}\Big(\,\frac{b}{2}u^2  +\frac{3-b}{2} (\partial_xu)^2\Big)=0,\;\;\;\;\;b\in[0,3].
\end{equation}

The parameter $b$ is related to the Finger deformation tensor to the material of the rod, see \cite{DaHu}.

Notice that for $b=2$ in \eqref{b-eq} one gets the CH equation meanwhile for $b=3$ in \eqref{b-eq} one obtains the Degasperis-Procesi (DP) equation \cite{DP}, the only bi-hamiltonian and integrable models in this family, see \cite{Iv}. Thus, the DP model possesses peakon solutions, see \eqref{peakon}, which display elastic collision properties, \cite{Ma}. 
\end{remark}

Therefore, we have :

\begin{theorem}
\label{general}
The result in Theorem \ref{IVPCH} applies to any real solution $\,u(\cdot,\cdot)$ in the class \eqref{class-sol} of the IVP associated to the equation \eqref{gen1}  satisfying the  hypotheses \eqref{gen1a} and \eqref{gen2}.

In particular, it covers all the $b$-equations described in \eqref{b-eq}.

\end{theorem}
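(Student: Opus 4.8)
\textbf{Proof plan for Theorem \ref{general}.}
The plan is to rerun the argument behind Theorem \ref{IVPCH}, isolating the two places where the specific CH nonlinearity $u^2+\tfrac12(\partial_xu)^2$ enters and checking that both survive under \eqref{gen1a}--\eqref{gen2}. Let $u$ be a solution of \eqref{gen1} in the class \eqref{class-sol} with $u\equiv0$ on a nonempty open set $\Omega\subset\R\times[0,T]$, and set $F(x,t)=h\big(u(x,t),\partial_xu(x,t)\big)$. On $\Omega$ we have $u=0$, hence $\partial_xu=0$ (each slice $\{x:(x,t)\in\Omega\}$ is open and $u(\cdot,t)\in W^{1,\infty}(\R)$ vanishes there) and, at interior times, $\partial_tu=0$ (since $u\in C^1$ in $t$ with $L^2$ values and $\Omega$ is open). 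Using $g(0,0)=0$ from \eqref{gen1a} this gives $g(u,\partial_xu)=0$ on $\Omega$, so \eqref{gen1} forces $\partial_x(1-\partial_x^2)^{-1}F=0$ on $\Omega$.

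The rigidity step is then verbatim the one for CH. Pick $t_0\in(0,T)$ and an interval $(a,b)$ with $(a,b)\times\{t_0\}\subset\Omega$ — possible since $\Omega$ is open and nonempty. As $(1-\partial_x^2)^{-1}=\tfrac12\,e^{-|\cdot|}*(\cdot)$ and $F(\cdot,t_0)\equiv0$ on $(a,b)$, for $x\in(a,b)$ the function $(1-\partial_x^2)^{-1}F(\cdot,t_0)$ equals $Ae^{-x}+Be^{x}$ with $A=\tfrac12\int_{-\infty}^{a}e^{y}F(y,t_0)\,dy$ and $B=\tfrac12\int_{b}^{\infty}e^{-y}F(y,t_0)\,dy$; these are finite (the weights are integrable on the respective half-lines and $F(\cdot,t_0)\in L^\infty(\R)$, so no global integrability of $F$ is needed) and, by the sign hypothesis \eqref{gen2} which forces $F\ge0$ everywhere, nonnegative. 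By the first step this quantity is constant in $x$ on $(a,b)$, and since $1,e^{x},e^{-x}$ are linearly independent on any interval we get $A=B=0$; then nonnegativity of $F$ yields $F(\cdot,t_0)\equiv0$ a.e.\ on $(-\infty,a)$ and on $(b,\infty)$, hence on all of $\R$. Finally, the strict positivity in \eqref{gen2} upgrades $h(u,\partial_xu)(\cdot,t_0)\equiv0$ to $(u,\partial_xu)(\cdot,t_0)\equiv0$; when $h$ is a function of a single variable only, one invokes instead the corresponding one-variable positivity together with $u(\cdot,t_0)\in H^1(\R)$. Either way $u(\cdot,t_0)\equiv0$.

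It remains to propagate $u(\cdot,t_0)\equiv0$ to the whole slab. Forward in time this follows from uniqueness for the IVP of \eqref{gen1} in the class \eqref{class-sol}, known for the $b$-equations \eqref{b-eq} as the analogue of Theorem A: the solution with zero data at $t_0$ is $u\equiv0$, so $u\equiv0$ on $[t_0,T]$. Backward in time one uses that each $b$-equation is invariant under $(x,t)\mapsto(-x,-t)$ — the transport term $u\partial_xu$ is odd in $\partial_xu$, the quadratic form $\tfrac b2u^2+\tfrac{3-b}2(\partial_xu)^2$ is even in $\partial_xu$, and the kernel $e^{-|\cdot|}$ is even — so $v(x,t):=u(-x,t_0-t)$ solves the same equation on $[0,t_0]$ with $v(\cdot,0)=0$, whence $v\equiv0$ by forward uniqueness and thus $u\equiv0$ on $[0,t_0]$. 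This covers \eqref{b-eq} for every $b\in[0,3]$: the quadratic form is nonnegative, strictly positive off the origin when $b\in(0,3)$, and at $b\in\{0,3\}$ it is a positive-definite function of the single surviving variable, so \eqref{gen1a} and \eqref{gen2} (the latter in its one-variable version at the endpoints) hold throughout.

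Since the analysis is elementary, the main obstacle is conceptual rather than technical: everything hinges on sign-definiteness of $h$. Were $h$ not nonnegative, the two exponentials in $Ae^{-x}+Be^{x}$ could cancel and the rigidity of constants on $(a,b)$ would break; were $h$ not strictly positive off the origin, one could not return from $F(\cdot,t_0)\equiv0$ to $u(\cdot,t_0)\equiv0$. The only point requiring some care is the first paragraph — that $\partial_tu=\partial_xu=0$ on $\Omega$ and that \eqref{gen1} holds pointwise there under the regularity \eqref{class-sol} — but this is routine given that $\Omega$ is open, $u(\cdot,t)\in W^{1,\infty}(\R)$, and $u\in C^1$ in $t$ with values in $L^2$. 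For a fully general pair $g,h$ satisfying \eqref{gen1a} one would additionally need the relevant local well-posedness and backward-uniqueness theory; that is exactly why the clean statement is phrased for the $b$-equations, where both are available and the time-reversal symmetry supplies backward uniqueness for free.
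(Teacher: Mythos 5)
Your proposal is correct and takes essentially the same route as the paper (which omits this proof precisely because it is the CH argument verbatim with $h(u,\partial_xu)$ in place of $u^2+\tfrac12(\partial_xu)^2$): the only structural inputs are $h\ge 0$ and strict positivity off the origin, and your two-exponential rigidity computation $Ae^{-x}+Be^{x}\equiv\text{const}$ on $(a,b)$ is the same mechanism the paper packages as the kernel inequality \eqref{P4} (and, in the remark following its proof, as $F'\ge 0$ on $(a,b)$). Your explicit handling of the forward/backward propagation in time via uniqueness and the $(x,t)\mapsto(-x,-t)$ symmetry addresses a step the paper leaves implicit, and is an addition rather than a deviation.
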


Finally, we shall consider the initial periodic boundary value problem  (IPBVP) associated to the above models. 

In this periodic setting one finds the following unique continuation results for the $b$-equations established in \cite{BrCo}, 

\begin{TD}[\cite{BrCo}] \label {BrCo}
Let $u\in C([0,\infty]\!:\!H^s(\mathbb S))\cap C^1((0,\infty)\!:\!H^{s-1}(\mathbb S)),\,s>3/2,$ be a global real solution to the IPBVP for $b$-equations \eqref{b-eq}
with $0\leq b\leq 2.731$. If $u$ vanishes at some point $(x_0,t_0)\in \mathbb S\times [0,\infty)$, then $u\equiv 0$.
\end{TD}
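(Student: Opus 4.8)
\medskip
\noindent\textbf{Proof proposal for Theorem D.}

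The plan is to combine the method of characteristics with the sign structure of the nonlocal term in \eqref{b-eq} and to reduce the statement to showing that the associated potential vanishes at $(x_0,t_0)$. Write \eqref{b-eq} as $\partial_t u+u\,\partial_x u+\partial_x P=0$, where
\[
P:=(1-\partial_x^2)^{-1}\Bigl(\tfrac b2\,u^2+\tfrac{3-b}{2}\,u_x^2\Bigr),\qquad\text{so}\qquad P-\partial_x^2 P=\tfrac b2\,u^2+\tfrac{3-b}{2}\,u_x^2=:f\ \ge\ 0 .
\]
On $\mathbb{S}$ this reads $P=G*f$, with $G$ the strictly positive, even, smooth periodic Green's function of $1-\partial_x^2$, which also satisfies $|G'|<G$. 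Since $u\in C([0,\infty):H^s(\mathbb{S}))$, $s>3/2$, $u$ is $C^1$ in $x$ and Lipschitz, uniformly on compact time intervals, so the flow $q=q(t,x)$ given by $\partial_t q=u(q,t)$, $q(0,x)=x$, is for each $t$ an orientation-preserving $C^1$-diffeomorphism of $\mathbb{S}$. The first step is the reduction: \emph{it suffices to show $P(x_0,t_0)=0$}. Indeed, since $G>0$ and $f\ge0$, this forces $f(\cdot,t_0)\equiv0$; as $0\le b<3$ keeps $\tfrac{3-b}{2}>0$, we get $u_x(\cdot,t_0)\equiv0$, hence $u(\cdot,t_0)\equiv u(x_0,t_0)=0$; forward and backward uniqueness (a Gr\"onwall estimate for the difference of two solutions, valid in both time directions on compact intervals) then give $u\equiv0$.

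Differentiating \eqref{b-eq} once in $x$ and using $\partial_x^2P=P-f$, one finds that along a characteristic $q(t)=q(t,x)$ the functions $v(t):=u(q(t),t)$ and $w(t):=u_x(q(t),t)$ satisfy
\[
v'=-P_x(q(t),t),\qquad w'=\tfrac{1-b}{2}\,w^2+\tfrac b2\,v^2-P(q(t),t).
\]
From $G>0$, $f\ge0$ one has $P(\cdot,t)>0$ whenever $u(\cdot,t)\not\equiv0$, and $|P_x|<P$. Moreover, integrating the identity $\tfrac{d}{d\eta}(e^{\eta}u^2)=e^{\eta}(u^2+2uu_\eta)$ against the one-sided exponential pieces of $G$ (as in the derivation of local-in-space blow-up criteria) yields pointwise lower bounds for $P$ by positive quadratic forms in $(u,u_x)$ with sharp, $b$-dependent constants — in simplest form $P\ge c_b\,u^2$ with $c_b>0$ for $0<b<3$, and more precisely sharpened versions of $P\mp P_x\ge(\text{quadratic in }u,u_x)$.

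Now argue by contradiction: assume $u\not\equiv0$, so $u(\cdot,t)\not\equiv0$ and $P(\cdot,t)>0$ for all $t$, and (by continuity of $t\mapsto\int_{\mathbb S}f$) $P(q(t),t)\ge(\min_{\mathbb S}G)\int_{\mathbb S}f(\cdot,t)\ge P_0>0$ near $t_0$. Running the characteristic through $(x_0,t_0)$ gives $v(t_0)=u(x_0,t_0)=0$ and $w'(t_0)=\tfrac{1-b}{2}w(t_0)^2-P(x_0,t_0)$. Plugging the pointwise lower bound for $P$ into the $w$-equation, and using $v(t_0)=0$ with $v'=-P_x$ to keep $v^2$ quadratically small near $t_0$, one sets up a Riccati-type inequality $w'\le-\delta w^2+(\text{small})$ which forces $w(t)=u_x(q(t),t)\to-\infty$ (or $+\infty$, running time backward) in finite time — wave breaking. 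This contradicts $u$ being a global solution, since $u\in C([0,T]:H^s(\mathbb{S}))$, $s>3/2$, forces $\sup_{[0,T]}\|u_x(\cdot,t)\|_\infty<\infty$ on each compact $[0,T]$. Hence $P(x_0,t_0)=0$ and $u\equiv0$. The threshold $0\le b\le 2.731$ (one expects the exact value $1+\sqrt3$) is exactly the range in which the quadratic terms $\tfrac{1-b}{2}w^2$ and $\tfrac b2v^2-P$, with the sharp constants in the pointwise lower bound for $P$, combine so the Riccati inequality genuinely closes.

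\emph{Main obstacle.} The heart of the proof is this last closure. For $b<1$ the term $\tfrac{1-b}{2}w^2$ has the wrong sign, and the term $\tfrac b2v^2-P$ is in general only negative up to $(\tfrac b2-c_b)v^2>0$, so producing a strictly negative-definite contribution in $w'$ — and, crucially, ensuring the resulting blow-up time lies inside $[0,\infty)$ — requires the right choice of time direction and of the auxiliary quantity tracked along the characteristic; quantifying when this works is exactly what yields the restriction on $b$. A secondary ingredient is the standard fact that global $H^s$-solutions, $s>3/2$, of the $b$-equation do not break waves on finite time intervals.
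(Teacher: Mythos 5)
Theorem D is not proved in this paper: it is quoted verbatim from Brandolese--Cortez \cite{BrCo} only to be contrasted with Theorem \ref{IPBVPCH}, whose proof (vanishing on an open set, monotonicity of $\partial_x(1-\partial_x^2)^{-1}f$ across that set) is entirely different and much softer. So your proposal can only be measured against the argument of \cite{BrCo}, whose strategy you have correctly identified: work along characteristics, use the Riccati-type equation $w'=\tfrac{1-b}{2}w^2+\tfrac b2 v^2-P$ together with pointwise lower bounds for $P$ and $P\pm P_x$ by quadratic forms in $(u,u_x)$, and contradict global existence via local-in-space wave breaking, the restriction on $b$ coming from closing this system of inequalities with the constants furnished by the periodic Green's function.

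As written, however, the proposal has two genuine gaps. First, the closure itself is deferred, and the version you state would fail: an inequality $w'\le-\delta w^2+(\text{small})$ forces finite-time blow-up only if $w$ first reaches and then remains below the threshold $-\sqrt{(\text{small})/\delta}$, and at the chosen point one may perfectly well have $w(t_0)=u_x(x_0,t_0)=0$; moreover the crude bound $P\ge(\min G)\int f\ge c_b u^2$ does not yield constants large enough to dominate $\tfrac b2v^2$ throughout $0\le b\le 2.731$. The actual argument propagates an invariant cone condition on the pair $(v,w)$ --- one tracks the sign of a quadratic form such as $(\beta v+w)(\beta v-w)$ rather than $w$ alone, with $\beta$ fixed by the sharp one-sided convolution estimates --- and this is exactly the step you label an ``obstacle'' rather than execute. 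Second, the escape hatch ``or $+\infty$, running time backward'' is not available: the solution is only assumed to exist on $[0,\infty)$, so a backward blow-up time lying in $(-\infty,0)$ gives no contradiction; this is precisely why the paper's remark emphasizes that Theorem D does not extend to solutions with finite life span. The repair is spatial rather than temporal: a $C^1$ periodic function $u(\cdot,t_0)$ that vanishes somewhere but is not identically zero has a point $x_1$ with $u(x_1,t_0)=0$ and $u_x(x_1,t_0)\le 0$ (take an endpoint of a connected component of $\{u>0\}$ or $\{u<0\}$), and one runs the forward blow-up criterion from $(x_1,t_0)$. A minor caveat: I would not assert that the threshold ``should be'' $1+\sqrt3$; the value $2.731$ arises from the periodic kernel $\cosh(\cdot-\lcol\cdot\rcol-1/2)/(2\sinh(1/2))$ and there is no reason for it to coincide with a line constant.
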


\begin{remark} Theorem D improves an earlier result in \cite{CoEs3} for the CH equation $b=2$ where it was assumed that the global 
solution satisfies that for any $t\in[0,\infty)$ there exists $x_t\in\mathbb S$ such that $u(x_t,t)=0$.
\end{remark} 

Analogous LWP results to those in Theorem A for the IVP were previously obtained in \cite{LKT} for the IPBVP.   More precisely:

\begin{TE}[\cite{LKT}] \label{thm1p}
Given  $u_0\in \mathcal X\equiv H^1(\mathbb S)\cap W^{1,\infty}(\mathbb S)$, there exist a non-increasing function $T=T(\|u_0\|_{\mathcal X})>0$ and a unique solution $u=u(x,t)$ of the IPBVP associated to the CH equation \eqref{CH}
such that
\begin{equation}
\begin{aligned}
\label{class-solp}
u\in Y_T
\equiv &C([0,T]\!:\!H^1(\mathbb S))\cap C^1((0,T)\!:\!L^2(\mathbb S))\\
&\cap L^{\infty}([0,T]\!:\!W^{1,\infty}(\mathbb S))=\mathcal S_T\cap L^{\infty}([0,T]\!:\!W^{1,\infty}(\mathbb S)),
\end{aligned}
\end{equation}
with
\[
\sup_{[0,T]}\|u(\cdot,t)\|_{\mathcal X}=\sup_{[0,T]}(\|u(\cdot,t)\|_{1,2}+\|u(\cdot,t)\|_{1,\infty})\leq c\|u_0\|_{\mathcal X},
\]
for some universal constant $c>0$.
Moreover, given $R>0$, the map $u_0\mapsto u$, taking the data to the solution, is continuous from the ball 
$\{u_0\in \mathcal X :\|u_0\|_{\mathcal X}\le R\}$
into $\mathcal{S}_{T(R)}$.
\end{TE}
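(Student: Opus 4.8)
The plan is to prove \eqref{class-solp} by the standard quasilinear scheme of regularization, uniform a priori estimates, and passage to the limit, treating uniqueness and continuous dependence separately. First I would mollify the datum, setting $u_0^n=J_{1/n}u_0$ with $J_\eps$ a periodic Friedrichs mollifier, so that $u_0^n\in H^\infty(\mathbb S)$ and $\|u_0^n\|_{\mathcal X}\le c\|u_0\|_{\mathcal X}$; the classical well-posedness theory for \eqref{CH} in $H^s(\mathbb S)$, $s>3/2$, then yields smooth solutions $u_n$ on maximal intervals. The whole point is to show these intervals contain a common $[0,T]$ with $T=T(\|u_0\|_{\mathcal X})>0$ non-increasing, on which $\sup_{[0,T]}\|u_n(\cdot,t)\|_{\mathcal X}\le c\|u_0\|_{\mathcal X}$.

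The heart of the argument is this uniform bound, and it splits along the two pieces of $\mathcal X$. For the $H^1$ part one uses the conservation law $E(u)(t)=\int_{\mathbb S}(u^2+(\partial_xu)^2)\,dx=E(u_0)$, valid for the smooth $u_n$, giving $\|u_n(\cdot,t)\|_{H^1(\mathbb S)}=\|u_0^n\|_{H^1(\mathbb S)}$ and, via $H^1(\mathbb S)\hookrightarrow L^\infty(\mathbb S)$, a time-independent bound on $\|u_n(\cdot,t)\|_\infty$. For the $W^{1,\infty}$ part, I would differentiate \eqref{CH} in $x$ and use $\partial_x^2(1-\partial_x^2)^{-1}=-I+(1-\partial_x^2)^{-1}$ to see that $v=\partial_xu_n$ satisfies, along the characteristics $\dot X=u_n(X,t)$,
\begin{equation}
\frac{d}{dt}v(X(t),t)=-\tfrac12 v^2+u_n^2-(1-\partial_x^2)^{-1}\big(u_n^2+\tfrac12 v^2\big).
\end{equation}
The periodic Green's function of $(1-\partial_x^2)^{-1}$ is positive and bounded, so the non-local term is nonnegative and, since $u_n^2+\tfrac12v^2$ is controlled in $L^1(\mathbb S)$ by $E(u_0)$, it is bounded by a constant depending only on $\|u_0\|_{H^1}$; with $u_n^2\le\|u_n\|_\infty^2\le C(\|u_0\|_{H^1})$ this gives a Riccati inequality $|\tfrac{d}{dt}v|\le\tfrac12 v^2+C_0$ whose comparison ODE stays finite up to a time $T=T(\|u_0\|_{\mathcal X})>0$. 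This is the crux: the kernel positivity together with the $H^1$ conservation prevent wave breaking on $[0,T]$ and yield $\sup_{[0,T]}\|\partial_xu_n\|_\infty\le c\|u_0\|_{\mathcal X}$.

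With the uniform bound in hand, existence follows by compactness. I would show $\{u_n\}$ converges in $C([0,T]\!:\!L^2(\mathbb S))$; since the equation is quasilinear the sequence is not Cauchy in the $\mathcal X$-norm, so this convergence must be extracted by a Bona--Smith type argument, estimating differences of solutions in $L^2$ while exploiting the mollification rates of $u_0^n$ across the various norms. The limit $u$ then lies in $L^\infty([0,T]\!:\!\mathcal X)$ by weak-$*$ lower semicontinuity of the norm, solves \eqref{CH}, and the persistence $u\in C([0,T]\!:\!H^1(\mathbb S))\cap C^1((0,T)\!:\!L^2(\mathbb S))$ is recovered from the equation and the smoothing of the non-local term.

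The main obstacle is uniqueness and the precise form of continuous dependence. A naive $L^2$ energy estimate for the difference $w=u_1-u_2$ of two solutions does not close: the non-local quadratic term contributes $\int_{\mathbb S}\partial_x(1-\partial_x^2)^{-1}\big[(\partial_xu_1+\partial_xu_2)\,\partial_xw\big]\,w\,dx$, and integration by parts only relocates the unfavorable factor $\partial_xw$, leaving a term of size $\|w\|_{L^2}\,\|\partial_xw\|_{L^2}$ that cannot be absorbed into $\|w\|_{L^2}$ alone. I would resolve this by passing to Lagrangian coordinates $q(t,\xi)$, $\dot q=u(t,q)$: the transport term is thereby removed, the equation becomes a fixed-point problem for the triple formed by the flow, its spatial derivative, and $v\circ q$, and the $W^{1,\infty}$ bound makes $q(t,\cdot)$ bi-Lipschitz with controlled constants, so a Gronwall estimate in Lagrangian variables gives uniqueness. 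Finally, consistent with Remark \ref{rem1}, continuous dependence holds only into the weaker space $\mathcal S_{T(R)}$ and not in the $\mathcal X$-topology; it follows by combining the uniform $\mathcal X$-bound, the $L^2$-Lipschitz dependence on the data, and interpolation, again within the Bona--Smith framework.
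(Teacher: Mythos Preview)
The paper does not contain a proof of this statement: Theorem~E is quoted from \cite{LKT} as a known result and is used only as background to define the class \eqref{class-solp} of solutions to which Theorem~\ref{IPBVPCH} applies. There is therefore no proof in the paper to compare your proposal against.

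That said, your outline is a reasonable sketch of the argument one expects in \cite{LKT} (and in its real-line analogue \cite{LiPoSi}): mollify the datum, use the $H^1$ conservation law together with the Riccati-type inequality along characteristics for $\partial_x u$ to secure a uniform $W^{1,\infty}$ bound on a common time interval, pass to the limit, and handle uniqueness in Lagrangian coordinates. If you wish to pursue this, the point that deserves the most care is the one you flag yourself---closing the uniqueness/continuity argument---since the Eulerian $L^2$ energy estimate for the difference does not close; the Lagrangian reformulation is indeed the device used in \cite{LKT}.
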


Our next theorem states that the results in Theorem \ref{general} for the IVP also hold for the IPBVP for the models in  \eqref{gen1} under the assumptions \eqref{gen1a}-\eqref{gen2}. In particular, includes all the $b$-equations in \eqref{b-eq}.

\begin{theorem} \label{IPBVPCH}
The result in Theorem \ref{IVPCH} applies to any solution $\,u(\cdot,\cdot)$ in the class \eqref{class-sol} of the IPBVP associated to the equation \eqref{gen1}  satisfying the  hypotheses \eqref{gen1a} and \eqref{gen2}.

In particular, it covers all the equations described in \eqref{b-eq}.

\end{theorem}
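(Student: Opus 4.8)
The plan is to reduce Theorem~\ref{IPBVPCH} to the proof of Theorem~\ref{IVPCH} by checking that every ingredient used in the real-line argument survives the passage to the circle $\mathbb S$. The essential point is that the only structural features of equation~\eqref{gen1} exploited in the proof of Theorem~\ref{IVPCH} are (a) the local-in-time smoothing/persistence properties coming from the class $Z_T$ (now $Y_T$, supplied by Theorem~E in the periodic setting), (b) the sign condition $h\ge 0$, $h>0$ away from the origin, and (c) the representation of the nonlocal operator $(1-\partial_x^2)^{-1}$ through convolution with an explicit positive kernel. On $\mathbb S$ the operator $(1-\partial_x^2)^{-1}$ is still given by convolution with a \emph{strictly positive, even, smooth} kernel $G_{\mathbb S}$ (the periodization of $\tfrac12 e^{-|x|}$, explicitly $G_{\mathbb S}(x)=\tfrac{\cosh(x-2\pi\lfloor x/2\pi\rfloor-\pi)}{2\sinh\pi}$ on $[0,2\pi)$), so properties (a)--(c) all hold verbatim.

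Concretely I would proceed as follows. First, fix a solution $u\in Y_T$ of the periodic problem that vanishes on an open set $\Omega\subset\mathbb S\times[0,T]$; pick $t_0$ and a spatial interval $I=(a,b)\subset\mathbb S$ with $I\times\{t_0\}\subset\Omega$, and indeed a full rectangle $I\times(t_0-\eta,t_0+\eta)\subset\Omega$. Second, evaluate equation~\eqref{gen1} at a point $x\in I$ and time $t$ in that rectangle: since $u\equiv 0$ and hence $g(u,\partial_xu)\equiv 0$ and $\partial_tu\equiv0$ on $\Omega$, one gets
\[
\partial_x\big(G_{\mathbb S}*h(u,\partial_xu)\big)(x,t)=0\qquad\text{for }x\in I,
\]
i.e. $G_{\mathbb S}*h(u,\partial_xu)$ is constant in $x$ on $I$ for each such $t$. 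Third — and this is where the sign hypothesis enters — differentiate the convolution using $\partial_x G_{\mathbb S}=G_{\mathbb S}-\delta$ (this is $\partial_x(1-\partial_x^2)^{-1}=(1-\partial_x^2)^{-1}-\mathrm{Id}$ acting on $h$; note $h(u,\partial_xu)=0$ on $I$ so the $\mathrm{Id}$ term drops), obtaining that
\[
\int_{\mathbb S}\big(\partial_xG_{\mathbb S}\big)(x-y)\,h(u,\partial_xu)(y,t)\,dy=0,\qquad x\in I .
\]
Since $\partial_xG_{\mathbb S}$ has a strict sign on each of the two arcs $\mathbb S\setminus\{x\}$ and $h(u(\cdot,t),\partial_xu(\cdot,t))\ge0$ everywhere, varying $x$ across $I$ forces $h(u(y,t),\partial_xu(y,t))=0$ for all $y\in\mathbb S$, whence by~\eqref{gen2} $u(y,t)\equiv 0$ and $\partial_xu(y,t)\equiv0$ for all $t$ in the rectangle. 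Thus the open set on which $u$ vanishes can be enlarged to a full horizontal strip $\mathbb S\times(t_0-\eta,t_0+\eta)$.

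Fourth, propagate this to all of $[0,T]$. Having $u(\cdot,t)\equiv0$ on an interval of times, I would use backward/forward uniqueness for the resulting equation: on the strip the equation reduces (by the computation above, now valid globally in $x$) to $\partial_tu = 0$ together with the constraint that $u$ and $\partial_xu$ vanish, or more robustly one invokes the uniqueness statement in Theorem~E with initial data $u(\cdot,t_1)=0$ at an endpoint $t_1$ of the strip — the zero solution is the unique solution in $Y_T$, so $u\equiv0$ on $[t_1,T]$, and similarly backward in time, giving $u\equiv 0$ on $\mathbb S\times[0,T]$. (Backward uniqueness for this quasilinear transport-type equation in the class $Y_T$ is immediate because the class is time-reversible: $t\mapsto -t$, $u\mapsto -u$ maps solutions of the $b$-family to solutions of the same family, reducing backward uniqueness to forward uniqueness.)

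The main obstacle, and the step requiring the most care, is the sign/support argument in the third step: one must verify that for the periodic kernel $\partial_x G_{\mathbb S}$ the map $x\mapsto \partial_xG_{\mathbb S}(x-\cdot)$, as $x$ ranges over the nondegenerate interval $I$, spans enough test functions against a fixed nonnegative measure $h(u,\partial_xu)(\cdot,t)\,dy$ to conclude that measure is zero. On $\mathbb R$ this is transparent because $\partial_x(\tfrac12 e^{-|x|})=-\tfrac12\sgn(x)e^{-|x|}$ changes sign exactly once; on $\mathbb S$ the derivative $\partial_xG_{\mathbb S}$ is $\tfrac{\sinh(x-\pi)}{2\sinh\pi}$ on $(0,2\pi)$, again with a single sign change, so differentiating the relation once more in $x$ (legitimate since the right-hand side is the constant $0$ on $I$) and using that $\partial_x^2 G_{\mathbb S}=G_{\mathbb S}$ on $\mathbb S\setminus\{0\}$ lets one peel off the Dirac contributions and reduce to positivity of $G_{\mathbb S}$ itself — which is the same mechanism as on the line. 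Once this localization-to-global step is in hand, the rest is a routine adaptation and the periodic LWP theory of Theorem~E supplies the remaining uniqueness and regularity facts exactly as Theorem~A did on $\R$.
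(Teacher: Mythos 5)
Your argument is correct, but it runs on a different mechanism than the paper's proof of Theorem \ref{IPBVPCH}. The paper keeps the structure of its first proof of Theorem \ref{IVPCH}: it compares the values of $F(x)=\partial_x(1-\partial_x^2)^{-1}h(u,\partial_xu)(x,t^*)$ at the two endpoints $a,b$ of the interval, and the only new point on $\mathbb S$ is the analogue of \eqref{P4}, namely $\partial_xG(b-y)>\partial_xG(a-y)$ for $y\in[0,1]\setminus[a,b]$, verified directly from the explicit formula \eqref{green} via the strict monotonicity of $\sinh$ together with the greatest-integer bookkeeping in \eqref{last}. You instead differentiate the identity $\partial_x\bigl(G\ast h(u,\partial_xu)\bigr)\equiv 0$ once more in $x$ on $I$, use $\partial_x^2(1-\partial_x^2)^{-1}=(1-\partial_x^2)^{-1}-\mathrm{Id}$ and the vanishing of $h(u,\partial_xu)$ on $I$ to get $(G\ast h(u,\partial_xu))\equiv 0$ on $I$, and conclude from the strict positivity of $G$; this is exactly the alternative proof the paper records in the remark following Theorem \ref{IVPCH}, transplanted to the circle. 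Both routes rest on the same ingredients (positivity of the Green function, respectively monotonicity of its derivative, plus $h\ge0$ with $h>0$ off the origin); yours avoids the endpoint case analysis at the price of justifying one extra distributional differentiation. Three slips to fix, none fatal: the identity you quote, ``$\partial_xG_{\mathbb S}=G_{\mathbb S}-\delta$'', i.e.\ $\partial_x(1-\partial_x^2)^{-1}=(1-\partial_x^2)^{-1}-\mathrm{Id}$, is false as written --- it is the \emph{second} derivative that satisfies this, and that is in fact the identity your computation actually uses; $G_{\mathbb S}$ is not smooth (it has a corner at the lattice points, just as $e^{-|x|}/2$ does at the origin), though it is strictly positive and Lipschitz, which suffices; and the remark that $\partial_xG_{\mathbb S}(x-\cdot)$ ``has a strict sign on each of the two arcs $\mathbb S\setminus\{x\}$'' is imprecise (the sign change sits at the antipode of $x$), but it plays no role in the argument you ultimately carry out. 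Your final propagation step, forward uniqueness from Theorem E combined with time reversibility, is sound and indeed more explicit than what the paper writes.
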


\begin{remark} It is interesting to compare the results in Theorem D and Theorem \ref{IPBVPCH}. On one hand, the vanishing assumption on the former is 
significantly weaker than that in the latter. On the other hand,   Theorem D (which only applies to the IPBVP) does not cover  all the $b$-equations in \eqref{b-eq}. In particular, it does 
not apply to the DP model ($b=3$). Moreover, since the equation is time reversible it is clear that the result in Theorem D does not extend to solutions 
of the IPBVP having finite life span.

Theorem \ref{IPBVPCH} applies to  all the $b$-equations in \eqref{b-eq}  and to any local solution of the IPBVP associated with the equation \eqref{gen1}.
\end{remark}

The rest of this work is organized as follows: Section 2 contains the proofs of Theorem \ref{IVPCH} and Theorem \ref{IPBVPCH}. It is also shown how the
argument in the proofs   of Theorem \ref{IVPCH} and Theorem \ref{IPBVPCH} can be extended to prove Theorem \ref{general}.
\section{Proof of Theorem \ref{IVPCH} and Theorem \ref{IPBVPCH}}



First we shall prove Theorem \ref{IVPCH}.
\vskip.1in
\begin{proof} [Proof of Theorem \ref{IVPCH}]

We recall that 
\begin{equation}
\label{fund-sol}
(1-\partial_x^2)^{-1}h(x)=\frac{1}{2}\,\big(e^{-|\cdot| }\ast h \big)(x),\;\;\;\;\;\;h\in L^2(\R).
\end{equation}

From  the hypothesis it follows that
\begin{equation}\label{P1b}
 \,\Big(u^2+\frac{(\partial_xu)^2}{2}\Big) \Big|_{\Omega}\equiv 0,
\end{equation}
 and from the equation \eqref{CH}  one gets 
\begin{equation}
\label{P1}
\partial_x(1-\partial_x^2)^{-1}\Big(u^2+\frac{(\partial_xu)^2}{2}\Big) \Big|_{\Omega}\,\equiv 0.
\end{equation}

Thus, $\;\exists\, t^*\in (0,T)$ and $\,I=[a,b],\;a<b$, $[a,b]\times\{t^*\}\subset \Omega$ such that defining
\begin{equation}
\label{P2}
\begin{aligned}
F(x)&:=\partial_x(1-\partial_x^2)^{-1}\Big(u^2+\frac{(\partial_xu)^2}{2}\Big)(x,t^*)\\
&=-\frac{1}{2}\,\sgn(\cdot)\,e^{-|\cdot|} \ast \Big(u^2+\frac{(\partial_xu)^2}{2}\Big)(x,t^*)
\end{aligned}
\end{equation}
and
\begin{equation}
\label{P2a}
f(x):=(u^2+\frac{(\partial_xu)^2}{2})(x,t^*)
\end{equation}
one has that
\begin{equation}
\label{P3}F\in L^1(\R)\cap L^{\infty}(\R)\cap C(\R),\;\;\;f\in L^1(\R)\cap L^{\infty}(\R),
\end{equation}
with
\begin{equation}
\label{P3a}
F(x)=f(x)=0,\;\;\;\;\;\;\,\;\;\;\;x\in [a,b].
\end{equation}
\vskip.05in

We observe that for any $y\notin [a,b]$
\begin{equation}
\label{P4}
-\sgn(b-y)\,e^{-|b-y|}> -\sgn(a-y)\,e^{-|a-y|}.
\end{equation}
Hence,
\begin{equation}
\label{P5}
\begin{aligned}
F(b)&=-\frac{1}{2}\,\int_{-\infty}^{\infty}\sgn(b-y) e^{-|b-y|}f(y)\,dy\\
&\geq -\ \frac{1}{2}\,\int_{-\infty}^{\infty}\sgn(a-y) e^{-|a-y|}f(y)\,dy=F(a),
\end{aligned}
\end{equation}
with $\,f\ge 0$ and 
\begin{equation}
\label{P6}
F(b)=F(a) \;\;\;\;\;\text{if and only if }\;\;\;\;f\equiv 0.
\end{equation}

Since, $F(b)=F(a)=0, $ we obtain the desired result.

\end{proof}

\begin{remark} 
One can give a different proof by showing that $\,F(\cdot)\,$ defined in \eqref{P2} is differentiable in $\,(a,b)$, (see \cite{LiPoSi}),
 with
 $$
 F'(x)= (e^{-|\cdot|}\ast f)(x) -f(x),\;\;\;\;\;x\in (a,b),
 $$
 (since $\,\partial_x^2(1-\partial_x^2)^{-1}=(1-\partial_x^2)^{-1}-1$). 
Therefore, since $f(x)=0$ for any $\;x\in [a,b]$ and $\,f\geq 0$ on $\,\R$ one has that
 $$
 F'(x)\geq 0,\;\;\;x\in (a,b),
  $$
 with
 $$
 F'(x)=0 \;\;\text{if and only if}\;\;f\equiv 0.
 $$
 
 Recalling \eqref{P3a}, i.e.   $\,F(a)=F(b)=0,\,$ one gets the result.
 \vskip.2in
 
 \end{remark}
\begin{proof}[Proof of Theorem \ref{IPBVPCH}]
The proof  is similar to that given for the IVP in Theorem \ref{IVPCH}. The only difference is to show that the equivalent  inequality in \eqref{P4} is satisfied in $\mathbb S \simeq \R/\Z\simeq [0,1)$. 

We recall that if $\,h\in L^2(\mathbb S)$, then 
$$
\partial_x(1-\partial_x^2)^{-1}h(x)= (\partial_x G\ast h)(x)
$$
where
\begin{equation}
\label{green}
G(x)=\frac{ \cosh(x-\lcol x\rcol-1/2)}{2\,\sinh(1/2)},\;\;\;\;x\in\R,
\end{equation}
and $\lcol \cdot\rcol\,$ denotes the greatest integer function. Observe that $G$ is differentiable in $\R-\Z$.

Thus, here $\,G(x)\,$ plays the role of the (Green) function $\,e^{-|x|}/2$ on the line (for $\,(1-\partial_x^2)$).

\vskip.1in
Hence, to obtain the equivalent expression to \eqref{P4} one has to show: if $0<a<b<1$, then
\begin{equation}
\label{007}
\partial_xG(b-y)>\partial_xG(a-y),\;\;\;\;\;y\in[0,1]-[a,b].
\end{equation}

Since
$$
\partial_xG(x)=\frac{ \sinh(x-\lcol x\rcol-1/2)}{2\,\sinh(1/2)},
$$
\vskip.07in
\noindent it suffices to see that if $\,y\in[0,1]-[a,b]$, then
$$
\sinh\big(b-y-\lcol b-y\rcol-\frac12\big)>\sinh\big(a-y-\lcol a-y\rcol -\frac12\big).
$$
 By combining that:
 \begin{equation}
 \label{last}
 \begin{aligned}
 \text{if}&\;\;\;\;y\in[0,a],\;\;\;\;\text{then}\;\;\;\,\lcol b-y\rcol=\lcol a-y\rcol=0,\\
 \text{if}&\;\;\;\;y\in[b,1],\;\;\;\;\,\text{then}\;\;\; \,\lcol b-y\rcol=\lcol a-y\rcol=-1,
 \end{aligned}
 \end{equation}
 and the fact that $\,\sinh(\cdot)$ is strictly increasing the proof is concluded.
\end{proof}

\vskip1mm

The proof of Theorem \ref{general} will be omitted since the argument follows same lines as the proofs of Theorem \ref{IVPCH} and
Theorem \ref{IPBVPCH} given in detail above.

\end{document}